\definecolor{vert}{rgb}{0,0.6,0}
\theoremstyle{plain}
\newtheorem{thm}{Theorem}
\newtheorem{defn}{Definition}
\newtheorem{lem}[thm]{Lemma}
\theoremstyle{remark}
\newtheorem{rem}{\bf{Remark}}
\newcommand{\Ha}{\mathcal{H}^{s}}
\DeclareMathOperator*{\osc}{osc}
\begin{document}
\title[Uniformly continuous Sobolev functions in the critical case]
{Sobolev functions in the critical case are uniformly continuous in $s$-Ahlfors regular metric spaces when $s\le 1$}

\author[X. Zhou]{Xiaodan Zhou}
\address{Xiaodan Zhou, Department of Mathematics, University of Pittsburgh, Pittsburgh, PA 15260, USA, {\tt xiz78@pitt.edu}}

\thanks{The author was partially supported by NSF grant DMS-1500647 of
Piotr Haj\l{}asz.}

\date{}
\subjclass[2010]{46E35, 28A80}
\keywords{Sobolev functions, uniform continuity, Fractals, Ahlfors regular}

\maketitle
\begin{abstract}
We prove that functions in the Haj\l{}asz-Sobolev space $M^{1,s}$ on an $s$-Ahlfors regular metric space are uniformly continuous when $s\le 1$. 
\end{abstract}
\section{Introduction}

Sobolev functions $u\in W^{1,1}([a,b])$ are absolutely continuous. When $n\ge 2$, $u(x)=\log|\log|x||$ provides an example of a discontinuous Sobolev function in $ W^{1,n}(B^n(0, e^{-1}))$. 
The aim of this paper is to investigate continuity of the Sobolev functions in the critical case, i.e. $p=n$ on more general metric spaces including fractals.

\begin{defn}
Let $(X, d, \mu)$ be a metric space equipped with a Borel measure $\mu$ and let $s>0$. We say that a metric measure space $(X,d,\mu)$ is $s$-Ahlfors regular if there is a constant $C_A\ge 1$, such that
\begin{equation*}\label{ahlfors}
C^{-1}_Ar^s\le\mu(B(x,r))\le C_Ar^s, \quad{\rm for}\quad x\in X\quad{\rm and}\quad 0<r<{\rm diam}(X).
\end{equation*}

\end{defn}
An important class of Ahlfors regular spaces is provided by fractals. Hutchinson \cite{Hu}, proved that fractals generated by iterated function systems satisfying the open set condition are Ahlfors regular.

 If $X$ is $s$-Ahlfors regular with respect to $\mu$, then $\mu$ is comparable to the Hausdorff measure $\mathcal{H}^s$. So $X$ is $s$-Ahlfors regular with respect to $\mathcal{H}^s$ \cite[Exercise 8.11.]{Hei1}.

Analysis on fractals has developed a great deal since eighties \cite{Ba, DS, JW, Kig2, Str} and the theory of Sobolev spaces on metric spaces play an important role in this development.
There are various extensions of the theory of Sobolev spaces to metric-measure spaces \cite{AGS, AT, Ch1, Ha1,Ha2, HK1, Hei1, HKST, Na1}. While the approach based on upper gradients (Cheeger and Newtonian Sobolev spaces) requires the space to be highly connected, it does not apply to fractals with limited connectivity properties. On the other hand, the theory of Haj\l{}asz-Sobolev spaces is rich without any assumption on connectivity of the space, and Haj\l{}asz-Sobolev spaces on fractals have been investigated in \cite{Hu1, Hu2, KPAS1, Ris, Ya}.

\begin{defn}
Let $(X, d, \mu)$ be a  metric space equipped with a Borel measure $\mu$. 
For $0<p<\infty$ we define the Haj\l{}asz-Sobolev space $M^{1,p}(X,d, \mu)$ to be the set of all functions $u\in L^p(X)$ for which there exists nonnegative Borel functions $g\in L^p(X)$ such that
\begin{equation*}
|u(x)-u(y)|\le d(x,y)\big(g(x)+g(y)\big)\ \ \ \mu-a.e.
\end{equation*}
Denote by $D(u)$ the class of all nonnegative Borel functions $g$ that satisfy the above inequality. Thus $u\in M^{1,p}(X,d, \mu)$ if and only if $u\in L^p(X)$ and $D(u)\cap L^p\neq \emptyset$. The space $M^{1,p}(X,d, \mu)$ is linear and we equip it with
\[
||u||_{M^{1,p}}=||u||_{L^p}+\inf_{g\in D(u)}||g||_{L^p}.
\]
\end{defn}
\begin{rem}
If $p\ge 1$, $||\cdot||_{M^{1,p}}$ is a norm and $M^{1,p}$ is a Banach space \cite{Ha1}. For applications of Haj\l{}asz-Sobolev spaces with $0<p<1$, see for example \cite{Ha2, JY, KS, KYZ1, KYZ2, Zh}. 
\end{rem}

The following result is the main theorem of this paper.
\begin{thm}
Let $(X, d, \mathcal{H}^s)$ be an $s$-Ahlfors regular metric space and $0<s\le 1$. If $u\in M^{1,s}(X, d, \mathcal{H}^{s})$, then $u$ is uniformly continuous. Moreover, there exists a constant $C>0$, such that for any ball $B\subset X$,
\begin{equation}\label{0}
\osc_{B}|u|=\sup_{x,y\in B}|u(x)-u(y)|\le C\Big(\int_{2B}g^s d\Ha\Big)^{\frac{1}{s}}\tag{$\ast$},
\end{equation}

where $g\in D(u)\cap L^s(X)$.
\end{thm}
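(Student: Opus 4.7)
The approach is a chaining argument that, for each $x\in X$, selects auxiliary points $y_k\to x$ along dyadic annuli centered at $x$ on which the Haj\l{}asz gradient $g$ is controlled by its annular average. The role of the assumption $s\le 1$ is encoded in the super-additivity inequality
\begin{equation*}
\sum_k c_k^{1/s} \;\le\; \Bigl(\sum_k c_k\Bigr)^{1/s}, \qquad c_k\ge 0,
\end{equation*}
which follows by iterating $(a+b)^{1/s}\ge a^{1/s}+b^{1/s}$, valid whenever $1/s\ge 1$. This is the mechanism that collapses a telescoping sum of local Haj\l{}asz increments into a single $L^s$-integral of $g$ over a fixed ball, and is what makes the critical case tractable precisely when $s\le 1$; it fails in the Euclidean setting $p=n\ge 2$, consistent with the log--log counterexample.

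Fix $B = B(z,r)$, $x\in B$, and choose an integer $N = N(s,C_A)$ with $2^{Ns}\ge 2C_A^2$. Set $r_k = r\,2^{-kN}$ and $A_k = B(x,r_k)\setminus B(x,r_{k+1})$. Ahlfors regularity gives $\Ha(A_k)\ge r_k^s/(2C_A)>0$. By Chebyshev the set of $y\in A_k$ satisfying $g(y)^s \le 2\,\Ha(A_k)^{-1}\int_{A_k}g^s\,d\Ha$ has measure at least $\Ha(A_k)/2$; intersecting with the full-measure subsets of Lebesgue points of $u$ and of points whose slice in the Haj\l{}asz exceptional set is null, one selects $y_k$ inductively (also avoiding the null slices of the already-chosen $y_j$'s, so that the Haj\l{}asz inequality is available at every pair $(y_j,y_k)$). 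These points satisfy $d(y_k,y_{k+1})\le 2r_k$ and
\begin{equation*}
g(y_k)\;\le\;(4C_A)^{1/s}\,r_k^{-1}\,\Bigl(\int_{A_k} g^s\,d\Ha\Bigr)^{1/s}.
\end{equation*}

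Substituting into $|u(y_k)-u(y_{k+1})|\le d(y_k,y_{k+1})(g(y_k)+g(y_{k+1}))$, summing, and using that the $A_k$'s are pairwise disjoint subsets of $B(x,r)\subset 2B$ together with the super-additivity above yields
\begin{equation*}
\sum_{k=0}^{\infty} |u(y_k)-u(y_{k+1})|\;\le\; C\Bigl(\int_{2B} g^s\,d\Ha\Bigr)^{1/s}.
\end{equation*}
Hence $(u(y_k))$ is Cauchy; comparing two admissible chains for $x$ via the same type of bound shows that $\tilde u(x):=\lim_k u(y_k)$ is independent of the chain. For $\Ha$-a.e.\ $x$ one has $g(x)<\infty$ and the Haj\l{}asz inequality is valid on the slice $\{x\}\times X$; then $|u(x)-u(y_k)|\le r_k(g(x)+g(y_k))\to 0$ identifies $\tilde u(x)=u(x)$, so $\tilde u$ is a representative of $u$.

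Finally, for $x,y\in B$, running chains with initial points $x_0,y_0\in 2B$ and applying the Haj\l{}asz inequality directly to $(x_0,y_0)$ (using $d(x_0,y_0)\le 4r$ and the $g$-bounds above) gives, by the triangle inequality, the oscillation estimate \eqref{0} for $\tilde u$. Uniform continuity of $\tilde u$ on $X$ then follows from absolute continuity of $E\mapsto\int_E g^s\,d\Ha$ combined with $\Ha(2B)\le 2^s C_A r^s\to 0$. The principal technical hurdle is not the analytic estimate but the measure-theoretic selection of the chain so that the pairwise Haj\l{}asz inequality holds at every step; this is handled by iterated intersection of positive-measure subsets of the $A_k$'s with countably many full-measure sets.
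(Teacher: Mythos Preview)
Your argument is correct and follows essentially the same route as the paper: a chaining argument over geometrically shrinking scales, selection of points with $g$-value controlled by the local $L^s$-average, telescoping via the Haj\l{}asz inequality, and the reverse Minkowski (super-additivity) inequality to collapse the sum over disjoint sets into a single integral over $2B$. The only cosmetic differences are that the paper packages the annulus nonemptiness into a separate lemma producing disjoint \emph{balls} inside the annuli (rather than working with the annuli themselves as you do), and that your careful handling of ``null slices'' is in fact unnecessary here, since the Haj\l{}asz definition furnishes a single null exceptional set $E$ outside of which the pointwise inequality holds for \emph{all} pairs simultaneously.
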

Here and in what follows by $2B$ we denote a ball concentric with $B$ and with twice the radius.
\begin{rem}
For example, the ternary Cantor set $C$ equipped with the Euclidean distance is $s$-Ahlfors regular with $s=\frac{\log 2}{\log 3}$, \cite[Example 2.7]{Fa1}.  
\end{rem}

\begin{rem}
The Sobolev functions in $W^{1,1}([a,b])$ are absolutely continuous. More generally, 
if the space $X$ is compact, connected and $\mathcal{H}^1(X)<\infty$, the functions in the Newtonian Sobolev spaces $N^{1,1}(X ,d ,\mu)$ are also absolutely continuous \cite{XZ1}.

\end{rem}

\begin{rem}
The restriction that $s\le1$ is necessary in our statement, since we apply the reverse Minkowski inequality in the proof. We conjecture that there alway exists discontinuous Sobolev functions in $M^{1,s}(X)$ when $X$ is an $s$-Ahlfors regular space with $s>1$.
\end{rem}
\section{Proof of the main theorem}
Let $C$ denote a general constant whose value can change even in the same string of estimates. 

\begin{lem}\label{BS}

Let $X$ be an $s$-Ahlfors regular space. Then there is a constant $0<C_0<1$ such that for any ball $B(x,r)$ with $0<r<{\rm diam}(X)$,
there is a sequence of balls $\{B_i\}_{i=1}^{\infty}=\{B(x_i, C_0^{i}r)\}_{i=1}^{\infty}\subset B(x, r)$ satisfying
\begin{itemize}
\item[(1)] $B_i\cap B_j=\emptyset$, if $i\neq j$,\\
\item[(2)] 
$B(x_i, C_0^{i}r)\subset B(x, C_0^{i-1}r)\setminus B(x, C_0^{i}r)$.\\
\end{itemize}
\end{lem}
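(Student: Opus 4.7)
The plan is to construct, at each scale $i \ge 1$, a single point $x_i$ lying in the annulus
\[
A_i := \bigl\{ y \in X : 2 C_0^{i} r \le d(y,x) < C_0^{i-1}(1 - C_0) r \bigr\},
\]
for a constant $C_0 \in (0, 1/3)$ depending only on $s$ and $C_A$. Once such $x_i$ are produced, both conclusions follow from the triangle inequality. The lower bound $d(x_i,x) \ge 2 C_0^i r$ forces $B(x_i, C_0^i r) \cap B(x, C_0^i r) = \emptyset$, while the upper bound $d(x_i, x) < C_0^{i-1}(1-C_0)r = C_0^{i-1}r - C_0^i r$ gives $B(x_i, C_0^i r) \subset B(x, C_0^{i-1} r)$; together these yield condition (2), and in particular $B_i \subset B(x,r)$ since $C_0^{i-1} \le 1$. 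Condition (1) is then automatic: for $i < j$ one has $B_j \subset B(x, C_0^{j-1} r) \subset B(x, C_0^i r)$ as $j - 1 \ge i$, whereas (2) at scale $i$ gives $B_i \cap B(x, C_0^i r) = \emptyset$.

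The real content is therefore showing that $A_i$ is nonempty for every $i \ge 1$, which I would do by a measure comparison. All radii involved stay strictly below $r < \diam(X)$, so the $s$-Ahlfors regularity applies and yields
\[
\mathcal{H}^s\bigl(B(x, C_0^{i-1}(1-C_0) r)\bigr) - \mathcal{H}^s\bigl(B(x, 2 C_0^i r)\bigr) \ge C_A^{-1}(1-C_0)^s (C_0^{i-1} r)^s - C_A \cdot 2^s (C_0^i r)^s.
\]
The right-hand side is positive as soon as $(1-C_0)^s > 2^s C_A^2 C_0^s$, i.e.\ $C_0 < (1 + 2 C_A^{2/s})^{-1}$. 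Fix any such $C_0$; since $C_A \ge 1$, this threshold is at most $1/3$, so the annulus is geometrically admissible as well. Then $\mathcal{H}^s(A_i) > 0$, hence $A_i \ne \emptyset$, and we may select $x_i \in A_i$.

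The appeal of this approach is that it is entirely scale-invariant: both radii defining $A_i$ carry the common factor $C_0^i r$, so a single choice of $C_0$ works for every $i$ simultaneously. I do not foresee any serious obstacle beyond the bookkeeping of the constants; the only thing to verify in passing is that each radius fed into the Ahlfors estimate lies in $(0, \diam(X))$, which is immediate from $r < \diam(X)$ and $C_0 \in (0,1)$.
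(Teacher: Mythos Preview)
Your argument is correct and follows essentially the same strategy as the paper: use Ahlfors regularity and a measure comparison to show that a suitable annulus about $x$ is nonempty at each scale, choose $x_i$ there, and then verify (1) and (2) via the triangle inequality. Your presentation is a bit more streamlined---you work directly with $C_0$ and the target annuli $A_i$, whereas the paper introduces an auxiliary constant $C_1$, proves $B(x,\rho)\setminus B(x,c\rho)\neq\emptyset$ once, and then sets $C_0=C_1^2$---but the underlying idea is identical.
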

\begin{proof}
Since $X$ is $s$-Ahlfors regular, there is a constant $C_A\ge 1$, such that for any ball $B(x, r)\subset X$ with $0<r<{\rm diam}(X)$, we have
\begin{equation*}\label{ahlfors}
C^{-1}_Ar^s\le\Ha(B(x,r))\le C_Ar^s.
\end{equation*}
We claim that for any $0<c<C_A^{-2/s}$, $$B(x, r)\setminus B(x, cr)\neq \emptyset.$$ 
For if not, $B(x, r)\setminus B(x, cr)=\emptyset$ would imply
\[
C_A^{-1}r^s\le\Ha(B(x,r))=\Ha(B(x, cr))\le C_A(cr)^s
\]
so $C_A^{-2/s}\le c$, which is a contradiction. Fix $0<c<C_A^{-2/s}$ and let $$C_1=\min\{\frac{c}{2}, \frac{\sqrt{5}-1}{2}\}.$$ Clearly, $0<C_1<1$. 

Fix a ball $B(x, r)\subset X$ with $0<r<{\rm diam}(X)$ and let $y\in B(x, r)\setminus B(x, cr)$. Then
\begin{equation*}\label{bs}
B(y, C_1r)\subset B(x, C_1^{-1}r)\setminus B(x, C_1r).
\end{equation*}

We can construct a sequence of balls $B_1, B_2\ldots \subset B(x,r)$ in the following way. Let 
 $\hat{r}_i=C_1^{2i-1}r$ and $\hat{B}_{i}=B(x, \hat{r}_i)$. By the previous argument, for each $\hat{B}_{i}$, there exists $x_i$ such that 
 \[
 B(x_i, C_1\hat{r}_i)\subset B(x, C_1^{-1}\hat{r}_i)\setminus B(x, C_1\hat{r}_i).
 \]
 Let $r_i=C_1\hat{r}_i=C_1^{2i}r$, so 
\[
 B(x_i, r_i)\subset B(x, r_{i-1})\setminus B(x, r_i).
\]

Let  $C_0=C_1^2$. Clearly, $0<C_0<1$. Let $B_i=B(x_i, r_i)=B(x_i, C_0^{i}r)$. Since these balls are contained in disjoint annuli, $B_i\cap B_j=\emptyset$ for $i\neq j$.
 
From the construction, the balls in this sequence satisfy 
$$B_i\subset B(x, C_0^{i-1}r)\setminus B(x, C_0^{i}r).$$ 
This completes the proof.
\end{proof}

Now, we can complete the proof of the main result follwoing some ideas from \cite{HajD}.
\begin{proof}
By definition, for $u\in M^{1,s}(X, d, \mathcal{H}^{s})$, there exists $g\in L^s(X)$ such that 
\[
|u(x)-u(y)|\le d(x,y)\big(g(x)+g(y)\big),
\]
when $x,y\in X\setminus E$ and $\mathcal{H}^s(E)=0$.

Let $E_0=\{x\in X\setminus E : g(x)<\infty\}$.  Clearly, $\mathcal{H}^s(X\setminus E_0)=0$. Fix a ball $B(z, r)\subset X$. We will prove that
\[
\sup_{x,y\in B(z,r)\cap E_0}|u(x)-u(y)|\le C\Big(\int_{B(z, 2r)}g^s d\Ha\Big)^{\frac{1}{s}}.
\]

Let $x, y\in B(z, r)\cap E_0$. According to Lemma \ref{BS}, there exists a sequence of disjoint balls $\{B_i\}_{i=1}^{\infty}=\{B(x_i, C_0^{i}r)\}_{i=1}^{\infty}\subset B(x, r)$ such that
\[
B_i=B(x_i, C_0^{i}r)\subset B(x, C_0^{i-1}r)\setminus B(x, C_0^{i}r).
\]
Let $r_i=C_0^{i}r$, then
\[
B_i=B(x_i, r_i)\subset B(x, C_0^{-1}r_i)\setminus B(x, r_i).
\]

Clearly, we can find $z_i\in B_i\cap E_0$ such that 
\begin{equation*}
 \begin{aligned}
 g(z_i)\le \Big(\fint_{B_i} g^sd\Ha\Big)^{1/s}&=\Big(\frac{1}{\mathcal{H}^s(B_i)}\Big)^{1/s}\Big(\int_{B_i} g^sd\Ha\Big)^{1/s}\\
 &\le \frac{C}{r_i}\Big(\int_{B_i} g^sd\Ha\Big)^{1/s}.\\
\end{aligned}
 \end{equation*}
Here the barred integral denotes the integral average. 

Notice that $z_i\in B_i\subset B(x, C_0^{-1}r_i)$ implies that $d(x, z_i)\le C_0^{-1}r_i$. It follows that
\begin{eqnarray*}
|u(x)-u(z_i)|&\le& d(x, z_i)\Big(g(x)+g(z_i))\Big) \nonumber \\
&\le& Cr_ig(x)+C\Big(\int_{B_i} g^sd\Ha\Big)^{1/s}.
\end{eqnarray*}
Hence, $\lim_{i\to \infty} u(z_i)=u(x)$. Thus,

\begin{eqnarray*}
|u(z_1)-u(x)|&\le& \sum_{i=1}^{\infty}|u(z_i)-u(z_{i+1})| \nonumber \\
&=& \sum_{i=0}^{\infty}d(z_i, z_{i+1})\Big(g(z_i)+g(z_{i+1})\Big).
\end{eqnarray*}

Since $z_i\in B_i$, $z_{i+1}\in B_{i+1}$ and $B_i, B_{i+1}\subset B(x, C_0^{-1}r_i)$, we have that $d(z_i, z_{i+1})\le Cr_i$. Since also $r_{i+1}=C_0r_i$, the above inequality implies that
\begin{eqnarray}\label{1}
|u(z_1)-u(x)|&\le& \sum_{i=0}^{\infty} Cr_i\Big(\frac{1}{r_i}\big(\int_{B_i} g^sd\Ha\big)^{1/s}+\frac{1}{r_{i+1}}\big(\int_{B_{i+1}} g^sd\Ha\big)^{1/s}\Big) \nonumber \\
&\le & C\sum_{i=0}^{\infty} \Big(\int_{B_i} g^sd\Ha\Big)^{1/s} \nonumber\\
&\le & C\Big(\int_{B(x,r)}g^sd\Ha\Big)^{1/s}.
\end{eqnarray}
The last inequality follows from the reverse Minkowski inequality and the fact that the balls in the sequence $\{B_i\}_{i=1}^{\infty}\subset B(x, r)$ are pairwise disjoint.

Notice that $z_1\in B_1\subset B(x,r)\subset B(z, 2r)$ and 
$$g(z_1)\le \frac{C}{r_1}\Big(\int_{B_1} g^sd\Ha\Big)^{1/s}\le \frac{C}{r}\Big(\int_{B(z, 2r)} g^sd\Ha\Big)^{1/s}.$$
Similarly, we can find $w_1\in B(y, r)\subset B(z, 2r)$ such that 
\begin{equation}\label{2}
|u(w_1)-u(y)|\le C\Big(\int_{B(y,r)}g^sd\Ha\Big)^{1/s},
\end{equation}
and
\[
g(w_1)\le \frac{C}{r}\Big(\int_{B(z,2r)} g^sd\Ha\Big)^{1/s}.
\]

Since $z_1, w_1\in B(z,2r)$, we also have

\begin{eqnarray}\label{3}
|u(z_1)-u(w_1))|&\le & d(z_1, w_1)(g(z_1)+g(w_1)) \nonumber\\
&\le & C\big(\int_{B(z,2r)}g^sd\Ha\big)^{1/s}.
\end{eqnarray}

Now the inequalities \eqref{1}, \eqref{2}, \eqref{3} readily give

\[
\sup_{x,y\in B(z,r)\cap E_0}|u(x)-u(y)|\le C\Big(\int_{B(z, 2r)}g^s d\Ha\Big)^{\frac{1}{s}}.
\]
This and the absolute continuity of the integral imply uniform continuity of $u|_{X\setminus E_0}$. Hence $u$ uniquely extends to a uniformly continuous function on $X$ and the inequality \eqref{0} follows. The proof is complete.

\end{proof}
\section*{Acknowledgments}
The author thanks Professor P. Haj\l{}asz for bringing this topic to her attention and valuable suggestions to improve the paper. She also thanks Professor N. Shanmugalingam for her interests and helpful comments. 
\setcitestyle{square}

\end{document}